\documentclass[11pt,reqno]{amsart}
\usepackage[T1]{fontenc}
\usepackage{lmodern}
\usepackage{amsmath,amssymb}
\usepackage{microtype}
\usepackage[letterpaper,margin=1in]{geometry}
\usepackage[hidelinks]{hyperref}

\newtheorem{theorem}{Theorem}[section]
\newtheorem{lemma}[theorem]{Lemma}
\newtheorem{corollary}[theorem]{Corollary}
\numberwithin{equation}{section}
\allowdisplaybreaks[1]

\title[A parametric Catalan-type congruence]{A parametric Catalan-type congruence for generalized central trinomial coefficients}
\keywords{Constant term method; generalized central trinomial coefficients; Catalan-type congruences; generating functions}
\hypersetup{pdftitle={A parametric Catalan-type congruence for generalized central trinomial coefficients},pdfauthor={}}

\author{Yuqi Liu}
\address{School of Mathematical Sciences, Capital Normal University, Beijing 100048, P.R.~China}
\email{lyq204219@163.com}
\date{}

\begin{document}
\begin{abstract}
Let $p>3$ be a prime, and let $b,c$ be integers with $p\nmid b+2c$. We obtain an explicit congruence modulo $p^2$ for the sum of $\binom{2k}{k}T_{2k}(b,c^2)/(4^k(k+1)(b+2c)^{2k})$ from $k=0$ to $(p-1)/2$, where $T_n(b,c)$ denotes the generalized central trinomial coefficient. This gives a parametric formula for the sum considered in a question of Wang and Cui. The proof uses a constant term representation and a logarithmic generating function to reduce the sum to two coefficients of an algebraic generating function. A differential identity and a coefficientwise polynomial congruence then yield the desired evaluation. The formula also applies when $p$ divides $b$ or $b-2c$.
\end{abstract}
\maketitle

\section{Introduction and notation}

For a nonnegative integer $n$, the generalized central trinomial coefficient (see, e.g., \cite{Noe2006}) is defined by

\[
T_n(b,c)=[x^n](x^2+bx+c)^n
=\operatorname{CT}_x(b+x+cx^{-1})^n.\tag{1.1}\label{eq:1.1}
\]

Here $[x^n]$ denotes coefficient extraction, and $\operatorname{CT}_x$ denotes the constant term with respect to $x$. We study the truncated sum

\[
S_p(b,c):=\sum_{k=0}^{(p-1)/2}
\frac{\binom{2k}{k}\,T_{2k}(b,c^2)}
{4^k(k+1)(b+2c)^{2k}}.\tag{1.2}\label{eq:1.2}
\]

The factor $\binom{2k}{k}/(k+1)$ is the $k$th Catalan number. The specialization $(b,c)=(4,-1)$ of \eqref{eq:1.2} is the sum appearing in a congruence conjectured by Sun \cite[Conjecture~2.1]{Sun2014} and proved by Wang and Cui \cite[Theorem~1.2]{WangCui2026}. Immediately after that theorem on page~3 of the cited version, Wang and Cui asked for a determination of \eqref{eq:1.2} modulo $p^2$.

In this paper, we give an explicit formula for \eqref{eq:1.2} in terms of the single parameter $a=(b-2c)/(b+2c)$. The proof combines the constant term method with formal generating functions. Its main step is a polynomial congruence for an auxiliary coefficient polynomial $J_p(a)$, from which the truncated sum is obtained by differentiation.

Throughout, $p>3$ is a prime, and we put

\[
m=\frac{p-1}{2},\qquad \varepsilon=(-1)^m.
\]

We write $\mathbb Z_{(p)}=\{u/v:\ u,v\in\mathbb Z,\ p\nmid v\}$. Congruences between rational numbers are interpreted in $\mathbb Z_{(p)}$, and polynomial congruences are interpreted coefficientwise.
All generating functions are expanded as formal power series in $s$
or $t$, as appropriate. Their coefficients are polynomials in $z$
whenever $z$ occurs, and Laurent polynomials in $x$ whenever $x$ occurs. Every square root is chosen to have constant term $1$. Expressions involving both $x$ and $t$ are first expanded in $t$, after which $\operatorname{CT}_x$ is applied coefficientwise.

Section 2 establishes four auxiliary lemmas. The main theorem and its proof are given in Section 3, followed by two specializations in Section 4.

\section{Auxiliary lemmas}

\begin{lemma}\label{lemma:2.1}
For $n\geq0$, define

\[
R_n(z)=\sum_{k=0}^{n}\frac{(-1)^k}{k+1}
\binom{n+k}{2k}z^{2k}.\tag{2.1}\label{eq:2.1}
\]

Then

\[
\sum_{n=0}^{\infty}R_n(z)s^n
=\frac{1-s}{z^2s}
\log\left(1+\frac{z^2s}{(1-s)^2}\right).\tag{2.2}\label{eq:2.2}
\]

\end{lemma}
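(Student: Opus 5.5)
We prove \eqref{eq:2.2} by interchanging the order of summation and applying the standard generating function for binomial coefficients along a diagonal. Treat both sides as formal power series in $s$ whose coefficients are polynomials in $z$. Since $\binom{n+k}{2k}=0$ for $k>n$, the upper limit in \eqref{eq:2.1} may be dropped. Writing $n=k+j$ with $j\ge0$, we get
\[
\sum_{n\ge0}R_n(z)s^n=\sum_{k\ge0}\frac{(-1)^kz^{2k}}{k+1}\sum_{n\ge k}\binom{n+k}{2k}s^n
=\sum_{k\ge0}\frac{(-1)^kz^{2k}s^k}{k+1}\sum_{j\ge0}\binom{2k+j}{2k}s^{j}.
\]
The interchange is legitimate formally, because the coefficient of each $s^n$ involves only finitely many terms ($k\le n$).

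Next, the inner sum is the classical series $\sum_{j\ge0}\binom{N+j}{N}s^j=(1-s)^{-N-1}$ with $N=2k$. Substituting gives
\[
\sum_{n\ge0}R_n(z)s^n=\frac{1}{1-s}\sum_{k\ge0}\frac{(-1)^k}{k+1}\left(\frac{z^2s}{(1-s)^2}\right)^{k}.
\]
Put $u=z^2s/(1-s)^2$, a formal power series in $s$ with zero constant term. Then we use
\[
\sum_{k\ge0}\frac{(-1)^k u^k}{k+1}=\frac{\log(1+u)}{u},
\]
where $\log(1+u)/u$ is understood as the power series $\sum_{k\ge0}(-1)^ku^k/(k+1)$. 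Composition with $u$ is valid because $u$ has no constant term. Multiplying by $1/(1-s)$ and using $\frac{1}{(1-s)u}=\frac{1-s}{z^2s}$ yields exactly the right-hand side of \eqref{eq:2.2}.

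The only delicate point is the interpretation of the right-hand side of \eqref{eq:2.2}. It is a priori a series in $s$ whose coefficients are rational in $z$, because of the factor $1/(z^2s)$. We should therefore note that $\log(1+u)$ is divisible by $u$ as a power series, so that the quotient $\log(1+u)/u$ has polynomial coefficients in $z$. This matches the paper's stated convention for generating functions. Beyond this bookkeeping, no step presents a real obstacle.
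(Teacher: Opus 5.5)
Your proposal is correct and follows essentially the same route as the paper: interchange the sums, apply the binomial series $\sum_{j\ge0}\binom{2k+j}{2k}s^j=(1-s)^{-2k-1}$, set $u=z^2s/(1-s)^2$, and identify $\sum_{k\ge0}(-1)^ku^k/(k+1)$ with $\log(1+u)/u$. Your remark on reading $\log(1+u)/u$ as a power series with polynomial coefficients in $z$ plays the same role as the paper's closing check of the case $z=0$.
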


\begin{proof}
For each fixed power of $s$, only finitely many summands contribute. Thus we may interchange the summations to obtain

\[
\sum_{n\geq0}R_n(z)s^n
=\sum_{k\geq0}\frac{(-1)^kz^{2k}}{k+1}
  \sum_{n\geq k}\binom{n+k}{2k}s^n.
\]

Writing $n=k+r$ and using the binomial series gives

\[
\sum_{n\geq k}\binom{n+k}{2k}s^n
=s^k\sum_{r\geq0}\binom{r+2k}{2k}s^r
=\frac{s^k}{(1-s)^{2k+1}}.
\]

Consequently, with $u=z^2s/(1-s)^2$, we have

\[
\sum_{n\geq0}R_n(z)s^n
=\frac{1}{1-s}\sum_{k\geq0}\frac{(-1)^k u^k}{k+1}
=\frac{1}{1-s}\frac{\log(1+u)}{u}.
\]

The last equality follows from the formal expansion of $\log(1+u)$. Substituting the value of $u$ proves \eqref{eq:2.2}. At $z=0$, its formal expansion is $1/(1-s)$, in agreement with $R_n(0)=1$.
\end{proof}

For the next lemmas, regard $a$ as an indeterminate and set

\[
\begin{gathered}
V_a(x)=(a+1)+\frac{1-a}{2}(x+x^{-1}),\\
F_a(t)=(1-2at+t^2)^{-1/2},\\
D_a(t)=\frac{t^2(1+t)}{(1+t^2)^2}F_a(t)
       =\sum_{r\geq0}d_r(a)t^r.
\end{gathered}\tag{2.3}\label{eq:2.3}
\]

In particular, $d_0(a)=d_1(a)=0$.

\begin{lemma}\label{lemma:2.2}
With the notation in \eqref{eq:2.1} and \eqref{eq:2.3}, we have

\[
\begin{gathered}
\sum_{n\geq0}(-1)^n\operatorname{CT}_xR_n(V_a(x))t^{2n}
=\frac{1+t^2}{t^2}\sum_{r\geq1}\frac{d_{2r}(a)}{r}t^{2r}\\
=\sum_{r\geq1}\frac{d_{2r}(a)}{r}t^{2r-2}
 +\sum_{r\geq1}\frac{d_{2r}(a)}{r}t^{2r}.
\end{gathered}\tag{2.4}\label{eq:2.4}
\]

In particular, for every $n\geq1$,

\[
\operatorname{CT}_xR_n(V_a(x))
=(-1)^n\left(\frac{d_{2n+2}(a)}{n+1}
             +\frac{d_{2n}(a)}{n}\right).\tag{2.5}\label{eq:2.5}
\]
\end{lemma}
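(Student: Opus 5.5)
The plan is to specialize Lemma~\ref{lemma:2.1} at $z=V_a(x)$, $s=-t^2$, expand the logarithm, and apply $\operatorname{CT}_x$ term by term. This reduces everything to the moments $\operatorname{CT}_xV_a(x)^j$. Their generating function is an explicit algebraic function, which turns out to be $D_a$ up to elementary factors.

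\emph{Step 1 (specialize Lemma~\ref{lemma:2.1}).} Substituting $s=-t^2$ in \eqref{eq:2.2} is legitimate coefficientwise, because each $R_n(z)$ is a polynomial. Put $y=t/(1+t^2)$. Then $z^2s/(1-s)^2=-z^2y^2$, and $\log(1-w)=-\sum_{k\ge1}w^k/k$ gives
\[
\sum_{n\ge0}(-1)^nR_n(V_a(x))t^{2n}
=\frac{1+t^2}{t^2}\sum_{k\ge1}\frac{V_a(x)^{2k-2}}{k}\,y^{2k}.
\]
This form avoids any negative powers of $V_a$. Each coefficient of $t$ is a Laurent polynomial in $x$, so $\operatorname{CT}_x$ may be applied termwise. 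Comparing with \eqref{eq:2.4}, it then suffices to prove
\[
\Phi(t):=\sum_{k\ge1}\frac{c_{2k-2}}{k}\,y^{2k}=\sum_{r\ge1}\frac{d_{2r}(a)}{r}t^{2r},
\qquad c_j:=\operatorname{CT}_xV_a(x)^j .
\]

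\emph{Step 2 (moment generating function).} Write $V_a=\alpha_0+\beta_0(x+x^{-1})$ with $\alpha_0=a+1$ and $\beta_0=(1-a)/2$. The standard identity $\operatorname{CT}_x\bigl(\alpha-\beta(x+x^{-1})\bigr)^{-1}=(\alpha^2-4\beta^2)^{-1/2}$ then gives
\[
\sum_{j\ge0}c_jy^j=\bigl(1-2(a+1)y+4ay^2\bigr)^{-1/2}.
\]
The key algebraic fact is that after substituting $y=t/(1+t^2)$ the radicand factors:
\[
(1+t^2)^2-2(a+1)t(1+t^2)+4at^2=(1-t)^2(1-2at+t^2).
\]
Hence
\[
\sum_j c_jy^j=\frac{1+t^2}{1-t}F_a(t)=\frac{(1+t^2)(1+t)}{1-t^2}F_a(t).
\]

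\emph{Step 3 (differentiate and compare).} Both sides of the identity to be proved have zero constant term, so it suffices to compare their images under $t\,d/dt$.
\begin{itemize}
\item On the right, $t\,d/dt$ gives $2\sum_r d_{2r}t^{2r}$, which is twice the even part of $D_a(t)$.
\item On the left, $t\,y'/y=(1-t^2)/(1+t^2)$, so
\[
t\Phi'(t)=2\,\frac{1-t^2}{1+t^2}\,y^2\sum_k c_{2k-2}y^{2k-2}.
\]
Since $y$ is odd in $t$, the sum is the even part in $t$ of $(1+t^2)(1+t)F_a/(1-t^2)$. The prefactor $y^2(1-t^2)/(1+t^2)=t^2(1-t^2)/(1+t^2)^3$ is even, so it can be moved inside the even part. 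The result is twice the even part of $t^2(1+t)F_a/(1+t^2)^2=D_a(t)$.
\end{itemize}
The two sides agree, which proves \eqref{eq:2.4}.

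Finally, \eqref{eq:2.5} follows by reading off the coefficient of $t^{2n}$ in the second line of \eqref{eq:2.4}, using $r=n+1$ in the first sum and $r=n$ in the second. The main obstacle is Step~2: recognizing the factorization of the radicand is what makes $D_a$ appear. Everything else is bookkeeping of even parts and of the formal interchange of $\operatorname{CT}_x$ with summation.
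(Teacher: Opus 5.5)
Your proof is correct and follows essentially the paper's route. You use the same specialization of Lemma~\ref{lemma:2.1} at $s=-t^2$, the same radicand factorization $(1-t)^2(1-2at+t^2)$, and the same comparison under $t\,d/dt$ with the even part of $D_a$; the only difference is that you apply $\operatorname{CT}_x$ before differentiating (via the moments $c_j$ and $y=t/(1+t^2)$), whereas the paper differentiates $L$ first and splits into $1/(1\mp V_at+t^2)$, which is the same even-part projection.
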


\begin{proof}
We first record a constant term identity. Let $\alpha$ and $\beta$ be formal power series in $t$ with $\alpha(0)=1$ and $\beta(0)=0$. Expanding in $t$, we obtain

\[
\begin{gathered}
\operatorname{CT}_x\frac{1}{\alpha-\beta(x+x^{-1})}
=\frac1\alpha\sum_{r\geq0}\binom{2r}{r}
  \left(\frac\beta\alpha\right)^{2r}\\
=\frac{1}{\sqrt{\alpha^2-4\beta^2}}.
\end{gathered}\tag{2.6}\label{eq:2.6}
\]

Indeed, $\operatorname{CT}_x(x+x^{-1})^{2r}=\binom{2r}{r}$, whereas the constant term of an odd power is zero. Take

\[
\alpha=1-(a+1)t+t^2,\qquad \beta=\frac{1-a}{2}t.
\]

Then

\[
\alpha^2-4\beta^2=(1-t)^2(1-2at+t^2).
\]

Applying \eqref{eq:2.6}, and then replacing $t$ by $-t$, gives

\[
\operatorname{CT}_x\frac{1}{1-V_a t+t^2}
=\frac{F_a(t)}{1-t},\qquad
\operatorname{CT}_x\frac{1}{1+V_a t+t^2}
=\frac{F_a(-t)}{1+t}.\tag{2.7}\label{eq:2.7}
\]

Write $V=V_a(x)$ and define

\[
L(t,x)=\sum_{r\geq1}\frac{V^{2r-2}t^{2r}}{r(1+t^2)^{2r}}
=-\frac1{V^2}\log\left(1-\frac{V^2t^2}{(1+t^2)^2}\right).
\tag{2.8}\label{eq:2.8}
\]

The series on the left defines $L$ without requiring $V$ to be invertible. Substituting $s=-t^2$ and $z=V$ into Lemma 2.1 yields

\[
\sum_{n\geq0}(-1)^nR_n(V)t^{2n}
=\frac{1+t^2}{t^2}L(t,x).
\]

Differentiating \eqref{eq:2.8} with respect to $t$ and decomposing the denominator, we find

\[
\frac t2\frac{\partial L}{\partial t}
=\frac{t^2(1-t^2)}{2(1+t^2)^2}
\left(\frac1{1-Vt+t^2}+\frac1{1+Vt+t^2}\right).
\]

Taking constant terms in $x$ and using \eqref{eq:2.7}, we obtain

\[
\begin{gathered}
\operatorname{CT}_x\frac t2\frac{\partial L}{\partial t}
=\frac{t^2}{2(1+t^2)^2}
  \bigl((1+t)F_a(t)+(1-t)F_a(-t)\bigr)\\
=\frac{D_a(t)+D_a(-t)}2
=\sum_{r\geq1}d_{2r}(a)t^{2r}.
\end{gathered}\tag{2.9}\label{eq:2.9}
\]

Since $L(0,x)=0$ and $\frac t2\frac{d}{dt}t^{2r}=r t^{2r}$, coefficient comparison gives

\[
\operatorname{CT}_xL(t,x)=\sum_{r\geq1}\frac{d_{2r}(a)}r t^{2r}.
\]

Substitution proves \eqref{eq:2.4}. Comparing the coefficients of $t^{2n}$ for $n\geq1$ gives \eqref{eq:2.5}. For $n=0$, only $d_2(a)=1$ contributes, so no expression containing $1/n$ is needed.
\end{proof}

\begin{lemma}\label{lemma:2.3}
Define

\[
J_p(a)=[t^{p-1}]\frac{1-t}{1+t^2}F_a(t).\tag{2.10}\label{eq:2.10}
\]

Then the following identity holds in $\mathbb Q[a]$:

\[
(p-1)d_{p+1}(a)+(p+1)d_{p-1}(a)
=(a+1)J_p'(a),\tag{2.11}\label{eq:2.11}
\]

where the prime denotes differentiation with respect to $a$.
\end{lemma}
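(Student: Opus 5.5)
The plan is to prove a stronger statement: the identity holds coefficientwise for every index, as an identity of formal power series in $t$ over $\mathbb Q[a]$. We then read off the coefficient of $t^{p-1}$. Put
\[
H(t)=\frac{D_a(t)}{t^2}=\frac{(1+t)F_a(t)}{(1+t^2)^2},
\]
which is a genuine power series because $d_0(a)=d_1(a)=0$. Consider
\[
G(t)=\sum_{n\ge0}\bigl(n\,d_{n+2}(a)+(n+2)\,d_n(a)\bigr)t^n .
\]
Since $\sum_n d_{n+2}t^n=H$ and $t^2D_a=t^4H$, we have $\sum_n n\,d_{n+2}t^n=tH'$ and $\sum_n (n+2)d_nt^n=t^{-1}(t^4H)'$. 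Here $'$ denotes $\partial/\partial t$. Hence
\[
G=tH'+4t^2H+t^3H'=t(1+t^2)H'+4t^2H .
\]
The coefficient of $t^{p-1}$ in $G$ is exactly the left side of \eqref{eq:2.11}. For the right side, differentiation in $a$ commutes with extracting $[t^{p-1}]$, so $J_p'(a)=[t^{p-1}]\,\frac{1-t}{1+t^2}\,\partial_aF_a(t)$.

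Both sides follow from the two elementary derivative formulas of $F_a=(1-2at+t^2)^{-1/2}$:
\[
\partial_aF_a=tF_a^3,\qquad \partial_tF_a=(a-t)F_a^3 .
\]
By the first formula, the generating function of the right side is $(a+1)(1-t)tF_a^3/(1+t^2)$.

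For $G$, we differentiate $H$ by the product rule:
\[
H'=\frac{F_a+(1+t)(a-t)F_a^3}{(1+t^2)^2}-\frac{4t(1+t)F_a}{(1+t^2)^3}.
\]
Multiplying by $t(1+t^2)$ produces the term $-4t^2(1+t)F_a/(1+t^2)^2=-4t^2H$, which cancels the $+4t^2H$ in $G$. What remains is
\[
G=\frac{tF_a^3}{1+t^2}\Bigl((1-2at+t^2)+(1+t)(a-t)\Bigr),
\]
where $F_a=F_a^3(1-2at+t^2)$ was used. The bracket simplifies to $1+a-t-at=(1+a)(1-t)$. Thus $G=(a+1)(1-t)tF_a^3/(1+t^2)$, which is the generating function of the right side.

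Comparing coefficients of $t^{p-1}$ gives \eqref{eq:2.11}, and the same argument gives the analogous identity at every index. The only step needing care is the bookkeeping for $G$: $tH'$ and $t^{-1}(t^4H)'$ must correctly encode the index shifts $d_{n+2}$ and $(n+2)d_n$. After that the cancellation is a short algebraic check, and no parity or $p$-adic input is needed.
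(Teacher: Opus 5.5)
Your proof is correct and is essentially the paper's argument: your series $G=t(1+t^2)H'+4t^2H$ is exactly $\frac{d}{dt}\bigl((1+t^2)D_a(t)/t\bigr)-(1-t^2)D_a(t)/t^2$. The paper likewise identifies this series with $(a+1)\partial_a\bigl(\frac{1-t}{1+t^2}F_a(t)\bigr)$, using the same two derivative formulas for $F_a$ in the product-rule identity \eqref{eq:2.12}, and then extracts $[t^{p-1}]$. The only difference is bookkeeping: the paper reads off $p(d_{p+1}+d_{p-1})$ and $d_{p+1}-d_{p-1}$ separately and rearranges, whereas you encode the combined coefficient $n\,d_{n+2}+(n+2)\,d_n$ directly.
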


\begin{proof}
Direct differentiation gives

\[
\frac{\partial F_a}{\partial a}=tF_a^3,
\qquad
\frac{\partial F_a}{\partial t}=(a-t)F_a^3.
\]

It follows that

\[
\frac{d}{dt}\bigl((1+t)F_a(t)\bigr)
=F_a+(1+t)(a-t)F_a^3
=(a+1)(1-t)F_a^3.
\]

Applying the product rule to $\frac{t}{1+t^2}\bigl((1+t)F_a(t)\bigr)$, we obtain

\[
\begin{gathered}
\frac{d}{dt}\left(\frac{t(1+t)}{1+t^2}F_a(t)\right)
=\frac{(1-t^2)(1+t)}{(1+t^2)^2}F_a(t)\\
\quad +(a+1)\frac{\partial}{\partial a}
  \left(\frac{1-t}{1+t^2}F_a(t)\right).
\end{gathered}\tag{2.12}\label{eq:2.12}
\]

The expression inside the derivative on the left is $(1+t^2)D_a(t)/t$. Its coefficient of $t^p$ is $d_{p+1}(a)+d_{p-1}(a)$, so the coefficient of $t^{p-1}$ after differentiation is

\[
p\bigl(d_{p+1}(a)+d_{p-1}(a)\bigr).
\]

The first term on the right of \eqref{eq:2.12} is $(1-t^2)D_a(t)/t^2$, whose coefficient of $t^{p-1}$ is $d_{p+1}(a)-d_{p-1}(a)$. The corresponding coefficient of the second term is $(a+1)J_p'(a)$. Hence

\[
p\bigl(d_{p+1}(a)+d_{p-1}(a)\bigr)
=d_{p+1}(a)-d_{p-1}(a)+(a+1)J_p'(a).
\]

Rearranging proves \eqref{eq:2.11}.
\end{proof}

\begin{lemma}\label{lemma:2.4}
Let $p>3$ be a prime and $m=(p-1)/2$. The polynomial in \eqref{eq:2.10} satisfies

\[
J_p(a)\equiv(-1)^m a^m
+p\!\sum_{\substack{0\leq j\leq p-1\\j\ne m}}
\frac{(-a)^j}{2j+1}\pmod{p^2}.\tag{2.13}\label{eq:2.13}
\]

Moreover,

\[
J_p'(a)\equiv(-1)^m m a^{m-1}
-p\!\sum_{\substack{1\leq j\leq p-1\\j\ne m}}
\frac{j(-a)^{j-1}}{2j+1}\pmod{p^2}.\tag{2.14}\label{eq:2.14}
\]

Both congruences hold coefficientwise in $\mathbb Z_{(p)}[a]$.
\end{lemma}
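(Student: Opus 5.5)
The plan is to compute the coefficient of each power $a^j$ in $J_p(a)$ exactly, and then reduce it modulo $p^2$. The key expansion comes from factoring out $1+t^2$:
\[
F_a(t)=(1+t^2)^{-1/2}\Bigl(1-\tfrac{2at}{1+t^2}\Bigr)^{-1/2}
=\sum_{j\ge0}\binom{2j}{j}\frac{a^j t^j}{2^j(1+t^2)^{j+1/2}}.
\]
Inserting this into \eqref{eq:2.10} gives
\[
[a^j]J_p(a)=\frac{\binom{2j}{j}}{2^j}\,[t^{p-1-j}]\,(1-t)(1+t^2)^{-j-3/2}.
\]
Only one of the two summands of $1-t$ contributes, according to the parity of $p-1-j$. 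Writing $i=\lfloor (p-1-j)/2\rfloor$, we get
\[
[a^j]J_p(a)=\pm\frac{\binom{2j}{j}}{2^j}\binom{-j-3/2}{i}
=\pm(-1)^i\frac{\binom{2j}{j}}{2^j}\cdot\frac{(2j+3)(2j+5)\cdots(2j+2i+1)}{2^i\,i!},
\]
with an explicit sign. Here $2j+2i+1$ equals $p+j$ or $p+j-1$. For $j\ge p$ the coefficient vanishes, since then $i<0$; so $J_p$ has degree at most $p-1$.

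Next I split into three ranges of $j$.

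\emph{Case $j<m$.} Then $2j+3\le p\le 2j+2i+1$, so exactly one factor of the odd product equals $p$, while $\binom{2j}{j}$ and $i!$ are prime to $p$. The coefficient is therefore $p$ times a $p$-integral quantity, and I only need that quantity modulo $p$. Reducing each odd factor $2j+2\ell+1$ with $2j+2\ell+1>p$ as $-(p-2j-2\ell-1)$ turns the product into a ratio of factorials. Combined with $\binom{2j}{j}/2^j$, $1/(2^ii!)$ and Wilson-type identities (for instance $(m!)^2\equiv -(-1)^m \pmod p$), this should simplify to $(-1)^j/(2j+1)$ modulo $p$.

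\emph{Case $m<j\le p-1$.} Now $p\mid\binom{2j}{j}$ exactly once, and no factor of the odd product is divisible by $p$, because $2j+3>p$ and $2j+2i+1<2p$. Again the coefficient is $p$ times a unit-or-zero quantity, which I evaluate modulo $p$ by the same factorial bookkeeping, aiming again at $(-1)^j/(2j+1)$. I will also check that no $1/(2j+1)$ with $2j+1=p$ appears: that would require $j=m$, which is excluded.

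\emph{Case $j=m$.} Here $i=0$ (since $p-1-m=m$ and the parity works out), so the coefficient is simply $\pm\binom{2m}{m}/2^m$ times a trivial factor. I have to show it is $\equiv(-1)^m$ modulo $p^2$, and this is where I expect the main obstacle. If my parity bookkeeping is off and $i$ is not $0$, I would instead write the odd product as $(p+2)(p+4)\cdots$ and expand to first order in $p$. I would then use the standard expansion $\binom{2m}{m}\equiv(-1)^m4^{p-1}\pmod{p^3}$ (Morley), or its mod $p^2$ version, to cancel the linear-in-$p$ term. If the naive bookkeeping leaves a harmonic-sum correction, I would pair $k$ with $p-k$ to show that it vanishes modulo $p$. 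I will double-check this case numerically for $p=5,7$ before finalizing.

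Once \eqref{eq:2.13} is established coefficientwise in $\mathbb Z_{(p)}[a]$, \eqref{eq:2.14} follows immediately. Differentiation multiplies the coefficient of $a^j$ by the integer $j$, so it preserves coefficientwise congruences modulo $p^2$. Differentiating the right side of \eqref{eq:2.13} term by term gives exactly the stated expression: the $j=0$ term drops out, and $(-1)^m a^m$ becomes $(-1)^m m a^{m-1}$.
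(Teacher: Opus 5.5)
\textbf{There is a genuine gap in the case $j=m$.}

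Your expansion of $F_a$ in powers of $2at/(1+t^2)$ is correct, and so is the formula $[a^j]J_p(a)=\pm\binom{2j}{j}2^{-j}\binom{-j-3/2}{i}$. The ranges $j<m$ and $j>m$ can be completed along the lines you indicate: writing $-j-\tfrac32=(m-j-1)-\tfrac p2$ and using $\binom{m}{r}\equiv(-1)^r\binom{2r}{r}4^{-r}\pmod p$ does give $p(-1)^j/(2j+1)$. You only assert that this bookkeeping should simplify, but it does.

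The problem is $j=m$. There $p-1-j=m$, so $i=\lfloor m/2\rfloor=\lfloor p/4\rfloor\ge1$, not $0$. Writing $H_n=\sum_{k=1}^n 1/k$, you get
\[
C_m=\pm(-1)^i\,\frac{\binom{2m}{m}}{2^m}\cdot\frac{(p+2)(p+4)\cdots(p+2i)}{2^i\,i!}
\equiv\pm(-1)^i\,\frac{\binom{2m}{m}}{2^m}\Bigl(1+\frac p2H_{\lfloor p/4\rfloor}\Bigr)\pmod{p^2}.
\]
Neither of your fallback tools closes this.
\begin{itemize}
\item Morley's congruence gives $\binom{2m}{m}/2^m\equiv(-1)^m8^m\equiv(-1)^m\left(\frac2p\right)\bigl(1+\tfrac{3p}{2}q_p(2)\bigr)\pmod{p^2}$, where $q_p(2)=(2^{p-1}-1)/p$. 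So it does not cancel the linear term; it contributes a second one. (The signs do combine correctly, since $\pm(-1)^i=\left(\frac2p\right)$.)
\item Pairing $k$ with $p-k$ only kills complete sums such as $H_{p-1}$. It says nothing about the partial sum $H_{\lfloor p/4\rfloor}$, which is generally nonzero modulo $p$; for $p=5,7$ it equals $H_1=1$.
\end{itemize}
Concretely, for $p=7$ you have $\binom{6}{3}/2^3=5/2\equiv27\not\equiv-1\pmod{49}$. Only the extra factor $(p+2)/2=9/2$ brings $C_3=45/4$ to $-1$. The two linear terms $\tfrac{3p}{2}q_p(2)$ and $\tfrac p2H_{\lfloor p/4\rfloor}$ cancel precisely because of Lehmer's congruence $H_{\lfloor p/4\rfloor}\equiv-3q_p(2)\pmod p$. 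That is a nontrivial input your plan does not contain.

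\textbf{How to close it.} You can cite Lehmer's congruence at this point. Alternatively, you can sidestep the direct computation of $C_m$ as the paper does. Once $C_j\equiv p(-1)^j/(2j+1)\pmod{p^2}$ is known for every $j\neq m$, use two facts:
\begin{itemize}
\item the exact value $J_p(-1)=[t^{p-1}]\frac{1-t}{(1+t)(1+t^2)}=1$, which follows from $F_{-1}(t)=1/(1+t)$;
\item $\sum_{j\neq m}1/(2j+1)\equiv0\pmod p$, obtained by pairing $j$ with $p-1-j$.
\end{itemize}
Together these force $C_m\equiv(-1)^m\pmod{p^2}$. This is where a pairing argument genuinely applies.

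For the coefficients with $j\neq m$, the paper also avoids factorial bookkeeping. It derives $2aJ_p'(a)+J_p(a)\equiv p(a+1)^{p-1}\pmod{p^2}$ from the exact identity $F_a=(1-2at+t^2)^mF_a^p$ and Frobenius, which gives $(2j+1)C_j\equiv p(-1)^j$ for all $j$ at once. Your direct coefficient computation is a legitimate alternative for that part. Your deduction of (2.14) from (2.13) by coefficientwise differentiation is fine.
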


\begin{proof}
The binomial series gives

\[
F_a(t)=\sum_{n\geq0}\frac{\binom{2n}{n}}{4^n}(2at-t^2)^n,
\]

so every coefficient of $F_a(t)$ belongs to $\mathbb Z[1/2,a]$. Since $(2at-t^2)^n=t^n(2a-t)^n$, only $n\leq p-1$ can contribute to the coefficient of $t^{p-1}$. Each such term has degree at most $n$ in $a$. Multiplication by $(1-t)/(1+t^2)$, which is independent of $a$ and has no negative powers of $t$, preserves this degree bound. Thus

\[
J_p(a)=\sum_{j=0}^{p-1}C_j a^j,
\qquad C_j\in\mathbb Z_{(p)}.
\]

Using $F_a+2a\,\partial F_a/\partial a=(1+t^2)F_a^3$ and the differentiation identity in the proof of Lemma 2.3, we obtain

\[
\begin{gathered}
2aJ_p'(a)+J_p(a)=[t^{p-1}](1-t)F_a(t)^3,\\
(a+1)\bigl(2aJ_p'(a)+J_p(a)\bigr)
=p[t^p](1+t)F_a(t).
\end{gathered}
\]

To evaluate the two required coefficients of $F_a$, put $Q(t)=(1-2at+t^2)^m$. The equality $p=2m+1$ gives the exact identity

\[
F_a(t)=Q(t)F_a(t)^p.
\]

As $F_a(t)=1+at+\cdots$, the Frobenius identity modulo $p$ yields

\[
F_a(t)^p\equiv1+a^pt^p\pmod{(p,t^{p+1})}.
\]

Here the congruence is in the formal power series ring modulo the ideal generated by $p$ and $t^{p+1}$. The polynomial $Q(t)$ has degree $p-1$ in $t$, and both its constant coefficient and its leading coefficient are $1$. Therefore

\[
[t^{p-1}]F_a(t)\equiv1,\qquad
[t^p]F_a(t)\equiv a^p\pmod p.
\]

It follows that $[t^p](1+t)F_a(t)\equiv1+a^p\equiv(a+1)^p\pmod p$. Substitution into the exact identity above gives

\[
(a+1)\bigl(2aJ_p'(a)+J_p(a)\bigr)
\equiv p(a+1)^p\pmod{p^2}.
\]

This is a polynomial congruence in the indeterminate $a$. Since the monic polynomial $a+1$ is not a zero divisor in $(\mathbb Z_{(p)}/p^2\mathbb Z_{(p)})[a]$, it can be cancelled. We conclude that

\[
2aJ_p'(a)+J_p(a)\equiv p(a+1)^{p-1}\pmod{p^2}.
\tag{2.15}\label{eq:2.15}
\]

This cancellation takes place in a polynomial ring and imposes no invertibility condition on later specializations of $a+1$. For $0\leq j\leq p-1$, we have

\[
\binom{p-1}{j}=\prod_{r=1}^{j}\frac{p-r}{r}\equiv(-1)^j\pmod p.
\]

Comparing coefficients of $a^j$ in \eqref{eq:2.15} therefore yields

\[
(2j+1)C_j\equiv p\binom{p-1}{j}
\equiv p(-1)^j\pmod{p^2}.\tag{2.16}\label{eq:2.16}
\]

In the range $0\leq j\leq p-1$, the integer $2j+1$ is divisible by $p$ precisely when $j=m$. Hence, for $j\ne m$,

\[
C_j\equiv\frac{p(-1)^j}{2j+1}\pmod{p^2}.
\]

To determine the remaining coefficient $C_m$, evaluate at $a=-1$. Since $F_{-1}(t)=1/(1+t)$,

\[
\begin{gathered}
J_p(-1)
=[t^{p-1}]\frac{1-t}{(1+t)(1+t^2)}\\
=[t^{p-1}]\left(\frac1{1+t}-\frac{t}{1+t^2}\right)=1,
\end{gathered}
\]

where the last equality uses the fact that $p-1$ is even. Substituting the coefficients already determined gives

\[
1\equiv(-1)^m C_m+
p\!\sum_{\substack{0\leq j\leq p-1\\j\ne m}}\frac1{2j+1}
\pmod{p^2}.
\]

Pair the index $j$ with $p-1-j$. The only possible fixed point is $j=m$, which has been excluded, and each pair satisfies

\[
\frac1{2j+1}+\frac1{2p-(2j+1)}\equiv0\pmod p.
\]

Thus the reciprocal sum vanishes modulo $p$, and $C_m\equiv(-1)^m\pmod{p^2}$. Together with the other coefficients, this proves \eqref{eq:2.13}. Differentiating coefficientwise and using $\frac{d}{da}(-a)^j=-j(-a)^{j-1}$ proves \eqref{eq:2.14}. The differentiated sum starts at $j=1$, so it remains a polynomial expression at $a=0$.
\end{proof}

\section{The main congruence}

\begin{theorem}\label{theorem:3.1}
Let $p>3$ be a prime, and let $b,c\in\mathbb Z$ satisfy $p\nmid b+2c$. Put

\[
m=\frac{p-1}{2},\qquad \varepsilon=(-1)^m,
\qquad a=\frac{b-2c}{b+2c}.
\]

Then the sum in \eqref{eq:1.2} satisfies

\[
\begin{gathered}
S_p(b,c)\equiv{}-2m(a+1)a^{m-1}\\
+2\varepsilon p(a+1)
\!\sum_{\substack{1\leq j\leq p-1\\j\ne m}}
\frac{j(-a)^{j-1}}{2j+1}\pmod{p^2}.
\end{gathered}\tag{3.1}\label{eq:3.1}
\]

Equivalently, the first term on the right is $(1-p)(a+1)a^{m-1}$. Every denominator $2j+1$ appearing in the sum is coprime to $p$, so the expression is defined under the stated hypothesis.
\end{theorem}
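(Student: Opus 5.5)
The plan is to turn $S_p(b,c)$ into the constant term $\operatorname{CT}_xR_m(V_a(x))$ with $m=(p-1)/2$, evaluate that constant term exactly by Lemma~2.2, rewrite it through $J_p'(a)$ using Lemma~2.3, and finish with the congruence of Lemma~2.4. All identities are first proved with $a$ an indeterminate. We then specialize $a=(b-2c)/(b+2c)\in\mathbb Z_{(p)}$. This is legitimate because every polynomial involved ($d_r$, $J_p$, $R_m$) has coefficients in $\mathbb Z_{(p)}$; for the $d_r$ this follows from the binomial expansion of $F_a$ used in the proof of Lemma~2.4.

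\emph{Step 1 (constant term form).} By \eqref{eq:1.1}, $T_{2k}(b,c^2)=\operatorname{CT}_x(b+x+c^2x^{-1})^{2k}$. If $c\neq0$, substitute $x\mapsto cx$, which does not change constant terms; this gives $\operatorname{CT}_x(b+c(x+x^{-1}))^{2k}$, and this form also holds trivially when $c=0$. With $a=(b-2c)/(b+2c)$ we have $a+1=2b/(b+2c)$ and $(1-a)/2=2c/(b+2c)$. Hence $V_a(x)=2(b+c(x+x^{-1}))/(b+2c)$, and so
\[
\frac{T_{2k}(b,c^2)}{(b+2c)^{2k}}=\frac{\operatorname{CT}_xV_a(x)^{2k}}{4^k}.
\]
Therefore
\[
S_p(b,c)=\operatorname{CT}_x\sum_{k=0}^{m}\frac{\binom{2k}{k}}{16^k(k+1)}V_a(x)^{2k}.
\]

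\emph{Step 2 (comparison with $R_m$).} For $0\le k\le m$ I will use the classical congruence
\[
\binom{m+k}{2k}(-16)^k\equiv\binom{2k}{k}\pmod{p^2}.
\]
It follows from the exact identity
\[
\binom{m+k}{2k}(-16)^k=\binom{2k}{k}\prod_{i=1}^{k}\Bigl(1-\frac{p^2}{(2i-1)^2}\Bigr),
\]
obtained by writing $(m+k)(m+k-1)\cdots(m-k+1)=\prod_{i=1}^k\bigl(p^2-(2i-1)^2\bigr)/4^k$. Every factor $2i-1$ is at most $p-2$ and hence prime to $p$. The denominators $k+1\le m+1<p$ are also units in $\mathbb Z_{(p)}$. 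It follows that, coefficientwise in $z$,
\[
R_m(z)\equiv\sum_{k=0}^m\frac{\binom{2k}{k}}{16^k(k+1)}z^{2k}\pmod{p^2},
\]
and consequently $S_p(b,c)\equiv\operatorname{CT}_xR_m(V_a(x))\pmod{p^2}$. I expect this step to be the main obstacle, in the sense that it is the only place where genuine arithmetic on binomial coefficients is needed; everything afterwards is formal.

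\emph{Step 3 (evaluation).} By \eqref{eq:2.5} with $n=m\ge2$,
\[
\operatorname{CT}_xR_m(V_a)=\varepsilon\Bigl(\frac{d_{p+1}}{m+1}+\frac{d_{p-1}}{m}\Bigr)
=\frac{2\varepsilon\bigl((p-1)d_{p+1}+(p+1)d_{p-1}\bigr)}{p^2-1}.
\]
By Lemma~2.3 this equals $\dfrac{2\varepsilon(a+1)J_p'(a)}{p^2-1}$. Since $(p^2-1)^{-1}\equiv-1\pmod{p^2}$, we obtain
\[
S_p(b,c)\equiv-2\varepsilon(a+1)J_p'(a)\pmod{p^2}.
\]
Inserting \eqref{eq:2.14} and using $\varepsilon^2=1$ gives
\[
S_p(b,c)\equiv-2m(a+1)a^{m-1}+2\varepsilon p(a+1)\sum_{\substack{1\le j\le p-1\\ j\ne m}}\frac{j(-a)^{j-1}}{2j+1}\pmod{p^2},
\]
which is \eqref{eq:3.1}. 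Finally, $-2m=1-p$ gives the equivalent form of the first term stated in the theorem.
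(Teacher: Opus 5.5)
Your proposal is correct and follows the paper's proof essentially step for step. It uses the symmetric constant term form, then the congruence $(-1)^k\binom{m+k}{2k}\equiv\binom{2k}{k}/16^k\pmod{p^2}$ to get $S_p(b,c)\equiv\operatorname{CT}_xR_m(V_a(x))$, then Lemma 2.2, Lemma 2.3 and \eqref{eq:2.14}. The only cosmetic difference is that you combine the two fractions exactly over $p^2-1$ and use $(p^2-1)^{-1}\equiv-1\pmod{p^2}$, whereas the paper reduces $1/(m+1)$ and $1/m$ separately modulo $p^2$; both routes give $S_p(b,c)\equiv-2\varepsilon(a+1)J_p'(a)\pmod{p^2}$.
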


\begin{proof}
We begin with the symmetric constant term representation

\[
T_{2k}(b,c^2)=\operatorname{CT}_x\bigl(b+c(x+x^{-1})\bigr)^{2k}.
\]

Both sides expand to

\[
\sum_{r=0}^{k}\binom{2k}{2r}\binom{2r}{r}b^{2k-2r}c^{2r},
\]

so the identity also holds when $c=0$; see also \cite[Lemma~2.1]{WangCui2026}. The definition of $a$ gives

\[
V_a(x)=\frac{2\bigl(b+c(x+x^{-1})\bigr)}{b+2c}.
\]

For $0\leq k\leq m$, pairing factors in the binomial coefficient yields

\[
\begin{gathered}
(-1)^k\binom{m+k}{2k}
=\frac{\prod_{r=1}^{k}\bigl((2r-1)^2-p^2\bigr)}{4^k(2k)!}\\
\equiv\frac{\prod_{r=1}^{k}(2r-1)^2}{4^k(2k)!}
=\frac{\binom{2k}{k}}{16^k}\pmod{p^2}.
\end{gathered}\tag{3.2}\label{eq:3.2}
\]

See also \cite[Eq.~(1.1)]{SunLegendreII2013} for this binomial congruence.

The denominators are coprime to $p$ because $2k\leq p-1$; at $k=0$, the products are empty and equal to $1$. Moreover, $k+1<p$, so division by $k+1$ preserves the congruence. By linearity of the constant term operator,

\[
\begin{gathered}
S_p(b,c)
=\operatorname{CT}_x\sum_{k=0}^{m}
  \frac{\binom{2k}{k}}{16^k(k+1)}V_a(x)^{2k}\\
\equiv\operatorname{CT}_x R_m(V_a(x))\pmod{p^2}.
\end{gathered}\tag{3.3}\label{eq:3.3}
\]

Applying Lemma 2.2 with $n=m$ and using $2m=p-1$, we obtain the exact identity

\[
\operatorname{CT}_xR_m(V_a(x))
=\varepsilon\left(\frac{d_{p+1}(a)}{m+1}
                  +\frac{d_{p-1}(a)}m\right).\tag{3.4}\label{eq:3.4}
\]

Each $d_r(a)$ belongs to $\mathbb Z[1/2,a]$, and its value at $a=(b-2c)/(b+2c)$ belongs to $\mathbb Z_{(p)}$. We may therefore use

\[
\frac1{m+1}=\frac2{p+1}\equiv2(1-p),\qquad
\frac1m=\frac2{p-1}\equiv-2(1+p)\pmod{p^2}.
\]

Suppressing the argument $a$ in $d_r(a)$ for brevity, \eqref{eq:3.3} and \eqref{eq:3.4} give

\[
\begin{gathered}
S_p(b,c)
\equiv2\varepsilon\bigl((d_{p+1}-d_{p-1})
                  -p(d_{p+1}+d_{p-1})\bigr)\\
=-2\varepsilon(a+1)J_p'(a)\pmod{p^2},
\end{gathered}\tag{3.5}\label{eq:3.5}
\]

where the last equality is a rearrangement of Lemma 2.3. Finally, \eqref{eq:2.14} gives

\[
\begin{gathered}
-2\varepsilon(a+1)J_p'(a)
\equiv{}-2m(a+1)a^{m-1}\\
+2\varepsilon p(a+1)
\!\sum_{\substack{1\leq j\leq p-1\\j\ne m}}
\frac{j(-a)^{j-1}}{2j+1}\pmod{p^2}.
\end{gathered}
\]

This proves \eqref{eq:3.1}. The argument requires only $p\nmid b+2c$ and involves no division by $a$ or $a+1$. In particular, it covers the cases $p\mid b-2c$ and $p\mid b$.
\end{proof}

\section{Two specializations}

For completeness, we record two immediate specializations. They illustrate that Theorem 3.1 remains applicable at $a=-1$ and $a=0$.

\begin{corollary}\label{corollary:4.1}
For every prime $p>3$,

\[
\sum_{k=0}^{(p-1)/2}
\frac{\binom{2k}{k}^{\!2}}{16^k(k+1)}
\equiv0\pmod{p^2}.\tag{4.1}\label{eq:4.1}
\]
\end{corollary}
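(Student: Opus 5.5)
We obtain Corollary 4.1 as the specialization $a=-1$ of Theorem 3.1, that is, $b-2c=-(b+2c)$, or $b=0$. We take $(b,c)=(0,1)$. Then $b+2c=2$, so the hypothesis $p\nmid b+2c$ holds for every prime $p>3$, and $a=(0-2)/(0+2)=-1$.

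First we identify the summand. By \eqref{eq:1.1}, $T_{2k}(0,1)=\operatorname{CT}_x(x+x^{-1})^{2k}=\binom{2k}{k}$, which is the same fact already used in the proof of Lemma 2.2. Also $4^k(b+2c)^{2k}=4^k\cdot4^k=16^k$. Hence
\[
S_p(0,1)=\sum_{k=0}^{m}\frac{\binom{2k}{k}^{2}}{16^k(k+1)},
\]
which is exactly the left side of \eqref{eq:4.1}.

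Next we evaluate the right side of \eqref{eq:3.1} at $a=-1$. Both the leading term $-2m(a+1)a^{m-1}$ and the sum term $2\varepsilon p(a+1)\sum_{j}\cdots$ carry the factor $a+1$, which vanishes. All other factors are well defined, because each $2j+1$ with $j\ne m$ is prime to $p$. Therefore the right side is $0$, and Theorem 3.1 gives $S_p(0,1)\equiv0\pmod{p^2}$.

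There is no real obstacle. The only point to check is that Theorem 3.1 legitimately applies at $a=-1$. It does: the proof of the theorem states explicitly that it involves no division by $a+1$, and the cancellation of $a+1$ in Lemma 2.4 was carried out in the polynomial ring $(\mathbb Z_{(p)}/p^2\mathbb Z_{(p)})[a]$ before any specialization. So specializing the coefficientwise congruences to $a=-1$ is valid.
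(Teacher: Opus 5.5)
Your proposal is correct and is the paper's own proof. You specialize Theorem 3.1 at $(b,c)=(0,1)$, so that $a=-1$, $T_{2k}(0,1)=\binom{2k}{k}$ and $4^k(b+2c)^{2k}=16^k$, and then observe that the right side of \eqref{eq:3.1} vanishes because of the factor $a+1$; your check that no division by $a+1$ occurs matches the remarks the paper makes in Lemma 2.4 and Theorem 3.1.
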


\begin{proof}
Take $(b,c)=(0,1)$ in Theorem 3.1. Then $a=-1$ and $T_{2k}(0,1)=\binom{2k}{k}$. The right-hand side of \eqref{eq:3.1} vanishes because it contains the factor $a+1$, proving \eqref{eq:4.1}.
\end{proof}

\begin{corollary}\label{corollary:4.2}
For every prime $p>3$,

\[
\sum_{k=0}^{(p-1)/2}
\frac{\binom{2k}{k}\binom{4k}{2k}}{64^k(k+1)}
\equiv\frac{2p}{3}(-1)^{(p-1)/2}\pmod{p^2}.
\tag{4.2}\label{eq:4.2}
\]
\end{corollary}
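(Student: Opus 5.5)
The plan is to specialize Theorem 3.1 at $a=0$, the second case mentioned at the start of this section. Take $(b,c)=(2,1)$. Then $b+2c=4$ is prime to $p$, so the hypothesis of Theorem 3.1 holds, and $a=(b-2c)/(b+2c)=0$.

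\textbf{Identifying the summand.} By \eqref{eq:1.1},
\[
T_{2k}(2,1)=[x^{2k}](x^2+2x+1)^{2k}=[x^{2k}](x+1)^{4k}=\binom{4k}{2k}.
\]
The denominator in \eqref{eq:1.2} becomes $4^k(k+1)\cdot 4^{2k}=64^k(k+1)$. Hence $S_p(2,1)$ is exactly the left-hand side of \eqref{eq:4.2}.

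\textbf{Evaluating the right-hand side of \eqref{eq:3.1} at $a=0$.} The factor $a+1$ equals $1$.
\begin{itemize}
\item Since $p>3$, we have $m=(p-1)/2\geq 2$. So $a^{m-1}$ has a positive exponent and vanishes at $a=0$. The term $-2m(a+1)a^{m-1}$ therefore contributes nothing.
\item In the sum, $(-a)^{j-1}$ vanishes at $a=0$ unless $j=1$. The index $j=1$ is admissible because $m\geq 2$ gives $1\neq m$.
\item The $j=1$ term is $1\cdot 1/3$. The total contribution is therefore $2\varepsilon p\cdot\frac13=\frac{2p}{3}(-1)^{(p-1)/2}$.
\end{itemize}
Since $p>3$, the denominator $3$ is a unit in $\mathbb Z_{(p)}$, so this value is well defined. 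This gives \eqref{eq:4.2}.

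\textbf{Where care is needed.} There is no real obstacle. The only points to check are the following.
\begin{itemize}
\item The case $m=1$ (that is, $p=3$) is excluded. For $m=1$, the term $a^{m-1}$ would survive and the index $j=1$ would be excluded from the sum.
\item The evaluation $a=0$ is legitimate. The proof of Theorem 3.1 explicitly avoids any division by $a$.
\end{itemize}
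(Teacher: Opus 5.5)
Your proposal is correct and follows the paper's proof exactly: you take $(b,c)=(2,1)$ in Theorem~\ref{theorem:3.1}, so that $a=0$ and $T_{2k}(2,1)=\binom{4k}{2k}$, and then observe that the first term vanishes since $m\geq2$ and only the $j=1$ summand survives, giving $2\varepsilon p/3$. Your extra checks, that the denominator becomes $64^k(k+1)$ and that $3$ is a unit in $\mathbb Z_{(p)}$, are fine additions.
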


\begin{proof}
Take $(b,c)=(2,1)$. Then $a=0$, and

\[
T_{2k}(2,1)=[x^{2k}](x+1)^{4k}=\binom{4k}{2k}.
\]

Since $m\geq2$, the first term on the right of \eqref{eq:3.1} vanishes. Only the summand $j=1$ survives in the second term, and $j=1\ne m$. The resulting expression is $2\varepsilon p/3$, which proves \eqref{eq:4.2}.
\end{proof}

\end{document}